\patchcmd{\@maketitle}{\artauthors}{\centerline{\artauthors}}{}{}
\theoremstyle{thmstyleone}%
\newtheorem{theorem}{Theorem}
\newtheorem{lemma}{Lemma}
\newtheorem{claim}{Claim}
\theoremstyle{thmstyletwo}%
\theoremstyle{thmstylethree}%
\newcommand{\hr}{{\hat{r}}}
\newcommand{\flo}[1]{\lfloor #1 \rfloor}
\newcommand{\cei}[1]{\lceil #1 \rceil}
\begin{document}

\title[Size Ramsey numbers of small graphs versus fans or paths]{Size Ramsey numbers of small graphs versus fans or paths}


\author[1]{\fnm{Yufan} \sur{Li}}\email{yfli@stu.hebtu.edu.cn}

\author*[1,2]{\fnm{Yanbo} \sur{Zhang}}\email{ybzhang@hebtu.edu.cn}

\author[3]{\fnm{Yunqing} \sur{Zhang}}\email{yunqingzh@nju.edu.cn}

\affil[1]{\orgdiv{School of Mathematical Sciences}, \orgname{Hebei Normal University}, \orgaddress{\city{Shijiazhuang}, \postcode{050024}, \state{Hebei}, \country{China}}}

\affil[2]{\orgname{Hebei International Joint Research Center for Mathematics and Interdisciplinary Science}, \orgaddress{\city{Shijiazhuang}, \postcode{050024}, \state{Hebei}, \country{China}}}

\affil[3]{\orgdiv{Department of Mathematics}, \orgname{Nanjing University}, \orgaddress{\city{Nanjing}, \postcode{210093}, \state{Jiangsu}, \country{China}}}

\abstract{For two graphs $G_1$ and $G_2$, the size Ramsey number $\hat{r}(G_1,G_2)$ is the smallest positive integer $m$ for which there exists a graph $G$ of size $m$ such that for any red-blue edge-coloring of the graph $G$, $G$ contains either a red subgraph isomorphic to $G_1$, or a blue subgraph isomorphic to $G_2$. Let $P_n$ be a path with $n$ vertices, $nK_2$ a matching with $n$ edges, and $F_n$ a graph with $n$ triangles sharing exactly one vertex. If $G_1$ is a small fixed graph and $G_2$ denotes any graph from a graph class, one can sometimes completely determine $\hat{r}(G_1,G_2)$. Faudree and Sheehan confirmed all size Ramsey numbers of $P_3$ versus complete graphs in 1983. The next year Erd\H{o}s and Faudree confirmed that of $2K_2$ versus complete graphs and complete bipartite graphs. We obtain three more Ramsey results of this type. For $n\ge 3$, we prove that $\hat{r}(P_3,F_n)=4n+4$ if $n$ is odd, and $\hat{r}(P_3,F_n)=4n+5$ if $n$ is even. This result refutes a conjecture proposed by Baskoro~et al. We also show that $\hat{r}(2K_2,F_2)=12$ and $\hat{r}(2K_2,F_n)=5n+3$ for $n\ge 3$. In addition, we prove that $\hat{r}(2K_2,nP_m)=\min\{nm+1, (n+1)(m-1)\}$. This result verifies a conjecture posed by Vito and Silaban.}

\keywords{Size Ramsey number, Fan, Matching, Path}


\pacs[MSC Classification]{05C55, 05D10}

\maketitle

\section{Introduction}\label{sec1}

Let $G$ be a graph (herein, we only consider finite simple graphs without isolated vertices). As usual, we denote by $V(G)$ and $E(G)$, respectively, the set of vertices and the set of edges of $G$. We refer to $\vert V(G)\vert$ as the \emph{order} and to $\vert E(G)\vert$ as the \emph{size} of $G$. For two graphs $G_1$ and $G_2$, we write $G\to (G_1,G_2)$ if for any partition $(E_1,E_2)$ of $E(G)$, either $G_1$ is a subgraph of the graph induced by $E_1$ or $G_2$ is a subgraph of the graph induced by $E_2$. The \emph{size Ramsey number} $\hat{r}(G_1,G_2)$ was introduced by Erd\H{o}s~et al.~\cite{Erdoes1978size} in 1978, where $\hat{r}(G_1,G_2)$ is the smallest positive integer $m$ for which there exists a graph $G$ of size $m$ satisfying $G\to (G_1,G_2)$. In the language of coloring, $\hat{r}(G_1,G_2)$ is the smallest size of $G$ with the property that $G$ always contains either a red copy of $G_1$ or a blue copy of $G_2$ for any red-blue edge-coloring of $G$. Size Ramsey numbers are one of the most well-established topics in graph Ramsey theory. A survey can be found in~\cite{Faudree2002survey}.

Size Ramsey numbers are more difficult to determine compared with (order) Ramsey numbers. A main problem in this area is to determine when $\hat{r}(G,G)$ grows linearly with $\vert V(G)\vert$. This topic has been well studied if $G$ is a path~\cite{Beck1983size,Dudek2017some}, a tree with bounded maximum degree~\cite{Dellamonica2012size,Haxell1995size}, a cycle~\cite{Haxell1995Induced,Javadi2019Size}, etc. On the other hand, there are also a number of papers concerning the exact value of size Ramsey numbers~\cite{Burr1978Ramsey,Davoodi2021Conjecture,Erdoes1984Size,Erdoes1978size,Faudree1983Size,Faudree1983Sizea,Harary1983Generalized,Javadi2018Question,Lortz1998Size,Lortz2021Size,Miralaei2019Size,Sheehan1984Class}.

In the present paper, we focus on determining $\hr(G_1,G_2)$ when $G_1$ has two edges and $G_2$ is any graph in a class of graphs. Since $G_1$ has no isolated vertices, it follows that $G_1$ is either a path of order three, herein denoted by $P_3$, or a matching with two edges, herein denoted by $2K_2$. A related folklore result is that $\hr(K_{1,m},K_{1,n})=m+n-1$, where $K_{1,n}$ is a star with $n$ edges. Faudree and Sheehan proved the following result, where $K_n$ is a complete graph on $n$ vertices.
\begin{theorem}[Faudree and Sheehan~\cite{Faudree1983Sizea}]\label{faudreetheorem}
	For a positive integer $n$ with $n\ge 2$,  $$\hr(P_3,K_n)=2(n-1)^2.$$
\end{theorem}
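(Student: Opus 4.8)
The plan is to translate the arrowing relation into a purely structural statement and then prove matching upper and lower bounds. First I would observe that a red copy of $P_3$ is exactly a pair of red edges sharing a vertex, so a red-blue colouring avoids a red $P_3$ if and only if its red edge set is a matching. Hence $G\to(P_3,K_n)$ holds if and only if for every matching $M\subseteq E(G)$ the graph $G-M$ contains a copy of $K_n$; equivalently, $G$ has an $n$-clique none of whose $\binom n2$ edges lies in $M$. Throughout I would work with this reformulation, so that ``$G$ arrows'' means ``no single matching meets every $n$-clique of $G$''.

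For the upper bound I would exhibit a graph with exactly $2(n-1)^2$ edges. Take $G=K_{2n-1}-M_0$, the complete graph on $2n-1$ vertices with a matching $M_0$ of $n-1$ edges deleted; its size is $\binom{2n-1}2-(n-1)=2(n-1)^2$. To see that $G\to(P_3,K_n)$, let $M$ be any matching of $G$. Since $M$ and $M_0$ are edge-disjoint, their union $H=M_0\cup M$ has maximum degree at most $2$ and, being a union of two matchings, contains no odd cycle; thus every component of $H$ is a path or an even cycle. Consequently $\alpha(H)\ge\lceil(2n-1)/2\rceil=n$, and any independent set of size $n$ in $H$ spans only edges of $K_{2n-1}$ that survive in $G-M$, i.e.\ a copy of $K_n$ in $G-M$. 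This proves $\hr(P_3,K_n)\le 2(n-1)^2$.

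For the lower bound I would induct on $n$, the base $n=2$ being immediate (a single edge can be coloured red, leaving neither a red $P_3$ nor a blue $K_2$, so at least two edges are needed). Assuming the bound for $n-1$, I take an edge-minimal $G$ with $G\to(P_3,K_n)$. If $G$ is complete then $G\supseteq K_{2n-1}$, since $K_{N}$ with $N\le 2n-2$ admits a matching whose largest independent set has size $\lceil N/2\rceil\le n-1$; this already gives $e(G)\ge\binom{2n-1}2\ge 2(n-1)^2$. Otherwise $G$ has a non-adjacent pair $a,b$. The key point is that no $n$-clique can contain both $a$ and $b$, so for every matching $M$ a surviving $n$-clique of $G-M$ meets $\{a,b\}$ in at most one vertex; deleting $a,b$ therefore leaves an $(n-1)$-clique in $(G-\{a,b\})-M$, which shows $G-\{a,b\}\to(P_3,K_{n-1})$. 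By the induction hypothesis $e(G-\{a,b\})\ge 2(n-2)^2$, and since $a\not\sim b$ we delete exactly $\deg a+\deg b$ edges, so
\[ e(G)=e(G-\{a,b\})+\deg a+\deg b\ge 2(n-2)^2+(\deg a+\deg b). \]
As $2(n-2)^2+(4n-6)=2(n-1)^2$, it remains only to find a non-adjacent pair with $\deg a+\deg b\ge 4n-6$.

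The main obstacle is precisely this last degree estimate, which I would derive from a minimum-degree lemma: in an edge-minimal arrowing graph every vertex has degree at least $2n-3$, so any non-adjacent pair already has degree sum at least $4n-6$. To prove the lemma I would argue that a vertex $v$ with $\deg v\le 2n-4$ can be eliminated, its neighbourhood having at most $2(n-1)-2$ vertices, so a near-perfect matching inside $N(v)$ destroys every $(n-1)$-clique there and hence every $n$-clique through $v$; meanwhile edge-minimality forces $G-v$ to fail to arrow, supplying a matching that kills all $n$-cliques avoiding $v$. Combining these two matchings into a single matching of $G$ that meets every $n$-clique is the delicate step, since they compete for the vertices of $N(v)$. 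Making this assembly precise --- controlling the overlap so that the resulting edge set is genuinely a matching --- is where the real work lies; the remainder of the proof is the bookkeeping above.
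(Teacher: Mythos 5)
The theorem you are proving is only quoted in this paper (it is Faudree and Sheehan's result, cited as background), so there is no in-paper proof to compare against; your argument has to stand on its own. Much of it does: the reformulation (no red $P_3$ $\Leftrightarrow$ the red edges form a matching) is right; the upper bound is complete and correct --- $K_{2n-1}$ minus a matching of $n-1$ edges has exactly $2(n-1)^2$ edges, and your observation that the deleted matching together with any red matching spans only paths and even cycles on an odd number of vertices, hence leaves an independent set of size $n$, is a valid proof that this graph arrows $(P_3,K_n)$. The inductive reduction in the lower bound is also sound: for non-adjacent $a,b$, every $n$-clique meets $\{a,b\}$ at most once, so $G-\{a,b\}\to(P_3,K_{n-1})$, and the arithmetic $2(n-2)^2+(4n-6)=2(n-1)^2$ is correct, as is the complete-graph case.

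The genuine gap is the minimum-degree lemma, which you state but do not prove, and it is not ``bookkeeping'' --- it is the entire difficulty of the lower bound. Concretely: edge-minimality gives a matching $M_2$ in $G-v$ killing every $n$-clique avoiding $v$, and a pairing of $N(v)$ (with $\vert N(v)\vert\le 2n-4$) gives a matching $M_1$ inside $N(v)$ killing every $n$-clique through $v$; but $M_2$ may saturate \emph{every} vertex of $N(v)$ by edges whose other endpoints lie outside $N(v)\cup\{v\}$. Such edges are useless against cliques through $v$ (a clique $\{v\}\cup S$ with $S\subseteq N(v)$ contains no edge leaving $N(v)\cup\{v\}$), yet they block every edge of $M_1$, and every edge at $v$, from being added while preserving the matching property. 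If instead you discard some edges of $M_2$ to make room, the $n$-cliques that were killed only by the discarded edges resurface, and nothing in your setup re-kills them. So in the scenario where $G[N(v)]$ contains an $(n-1)$-clique and $M_2$ saturates $N(v)$ from outside, your construction produces no valid red matching at all; the ``delicate step'' you defer is exactly where a correct proof must do real work (for instance by choosing $M_2$ with additional extremal properties, or by abandoning the lemma for a different counting argument), and without it the induction does not close. As written, you have established $\hr(P_3,K_n)\le 2(n-1)^2$ but not the matching lower bound.
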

Faudree and Sheehan also determined $\hr(K_{1,k},K_\ell+\overline{K_n})$ for $\ell \ge k \ge 2$ and every sufficiently large $n$. Here, $K_\ell+\overline{K_n}$ is formed from a complete graph $K_\ell$ and $n$ isolated vertices by joining every isolated vertex to every vertex of $K_\ell$. More precisely, they have the following result.
\begin{theorem}[Faudree and Sheehan~\cite{Faudree1983Sizea}]
	For positive integers $k$ and $\ell$ with $k\ge \ell\ge 2$ and sufficiently large $n$, $$\hr(K_{1,k},K_\ell+\overline{K_n})=\binom{k(\ell-1)+1}{2}+(k(\ell-1)+1)(n+k-1).$$
\end{theorem}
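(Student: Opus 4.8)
The plan is to prove matching upper and lower bounds for $\hr(K_{1,k},H)$, where I write $H=K_\ell+\overline{K_n}$ and $N=k(\ell-1)+1$; note that $N$ is exactly the ordinary star--clique Ramsey number, the threshold $N=r(K_{1,k},K_\ell)$. The guiding reformulation is that a red--blue colouring of a host graph $G$ omits a red $K_{1,k}$ precisely when the red subgraph $R$ has maximum degree at most $k-1$. Hence $G\to(K_{1,k},H)$ is equivalent to the statement that for every spanning subgraph $R$ of $G$ with $\Delta(R)\le k-1$, the blue graph $G-E(R)$ still contains $H$. So the whole problem is about how robustly a host graph can contain $H$ under deletion of an arbitrary bounded-degree subgraph.

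\textbf{Upper bound.} I would take the complete split graph $G=K_N+\overline{K_{n+k-1}}$, which has exactly $\binom{N}{2}+N(n+k-1)$ edges, and show $G\to(K_{1,k},H)$. Fix any colouring with $\Delta(\text{red})\le k-1$. Restricting to the clique $K_N$, the red edges there form a graph of maximum degree at most $k-1$, so its independence number is at least $N/k=(\ell-1)+1/k>\ell-1$; a red-independent set of size $\ell$ inside $K_N$ is a blue $K_\ell$, giving the required clique part $A\subseteq V(K_N)$. For the $\overline{K_n}$ part I count: every one of the remaining $(k\ell+n)-\ell=\ell(k-1)+n$ vertices is adjacent in $G$ to all of $A$, and such a vertex is ``blocked'' only if one of its edges to $A$ is red. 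Since the total red degree over $A$ is at most $\ell(k-1)$, at most $\ell(k-1)$ vertices are blocked, leaving at least $n$ vertices joined to $A$ entirely in blue. These $n$ vertices together with $A$ span a blue $K_\ell+\overline{K_n}$. Notice this argument works for every $n$, which already signals that the hypotheses $k\ge\ell\ge 2$ and large $n$ are needed only for the matching lower bound.

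\textbf{Lower bound.} Here I would argue that any $G$ with $G\to(K_{1,k},H)$ has at least $\binom{N}{2}+N(n+k-1)$ edges. The intuition driving the bound is a trade-off: to keep a blue $K_\ell$ alive after every bounded-degree red deletion one may, a priori, use a sparse ``robust $K_\ell$'' gadget on more than $N$ vertices (this is exactly why the pure clique case, cf.\ Theorem~\ref{faudreetheorem}, has a different and larger answer), but every vertex carrying the clique part must \emph{also} supply, robustly, the join to the $n$ vertices of $\overline{K_n}$, and this join cost scales like the size of the core times $n$. When $n$ is large the join cost dominates, so the optimal host minimises the number of core vertices, forcing a clique core on exactly $N=r(K_{1,k},K_\ell)$ vertices, each sending at least $n+k-1$ edges to the rest of the graph (the $n$ endpoints for $\overline{K_n}$ plus a further $k-1$ to absorb the adversary's red deletions, matching the blocking count above). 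Concretely, from a minimal arrowing $G$ I would extract a set $D$ of at least $N$ pairwise adjacent vertices, accounting for $\binom{N}{2}$ edges, and then show each vertex of $D$ sends at least $n+k-1$ edges to vertices outside $D$, accounting (without double counting) for the remaining $N(n+k-1)$.

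\textbf{Main obstacle.} The hard part is the lower bound, specifically proving that the core must be a genuine $K_N$ rather than a cheaper robust-$K_\ell$ configuration. This is precisely the step that fails for small $n$, so the argument has to quantify the large-$n$ threshold: one must show that any deviation from the $K_N$-core saves at most a bounded number of clique edges while costing $\Omega(n)$ extra join edges, so that for $n$ sufficiently large, and under $k\ge\ell$, no such deviation is profitable. Making the vertex-budget and blocking accounting tight enough to land on the exact constant $\binom{N}{2}+N(n+k-1)$, rather than merely an asymptotic bound, is where the real work lies.
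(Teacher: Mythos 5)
This statement is one the paper merely quotes from Faudree and Sheehan (it carries a citation and no proof in the text), so there is no in-paper argument to compare you against; your proposal has to stand on its own. Its upper-bound half does: taking $G=K_N+\overline{K_{n+k-1}}$ with $N=k(\ell-1)+1$, the absence of a red $K_{1,k}$ forces red maximum degree at most $k-1$, so the red graph inside $K_N$ has an independent set of size $\lceil N/k\rceil=\ell$, which is a blue $K_\ell$, call it $A$; at most $\ell(k-1)$ red edges leave $A$, while all $\ell(k-1)+n$ remaining vertices of $G$ are joined to every vertex of $A$, so at least $n$ of them are joined to $A$ entirely in blue. That is airtight, gives exactly $\binom{N}{2}+N(n+k-1)$ edges, and (as you note) needs no lower bound on $n$.

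The genuine gap is the lower bound, and your own text concedes it: "extract a set $D$ of at least $N$ pairwise adjacent vertices" and "show each vertex of $D$ sends at least $n+k-1$ edges outside" are statements of the desired conclusion, not steps of a proof. No adversary coloring is constructed, and no mechanism is given that forces a minimum arrowing graph to contain a clique core at all. The danger you flag is real, not hypothetical: the pure-clique case shows that a complete core on $N=r(K_{1,k},K_\ell)$ vertices is \emph{not} in general optimal. Indeed, by Theorem~\ref{faudreetheorem}, $\hr(P_3,K_m)=2(m-1)^2<\binom{2m-1}{2}$, and this value is attained by $K_{2m-1}$ with a matching of $m-1$ edges deleted, which still arrows $(P_3,K_m)$ because the union of the deleted matching and any red matching has independence number at least $m$. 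So an adversarially cheap host for $K_\ell+\overline{K_n}$ could a priori use such a "clique minus matching" core (or something sparser still), and ruling this out is exactly where largeness of $n$ and the hypothesis $k\ge\ell\ge 2$ must enter quantitatively: one has to show that every clique-edge saved in the core costs $\Omega(n)$ extra join edges, with constants sharp enough to recover the exact formula rather than an asymptotic one. Since none of that is carried out, what you have proved is only the inequality $\hr(K_{1,k},K_\ell+\overline{K_n})\le\binom{k(\ell-1)+1}{2}+(k(\ell-1)+1)(n+k-1)$.
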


Let $nK_2$ be a matching with $n$ edges. Erd\H{o}s and Faudree~\cite{Erdoes1984Size} obtained quite a few size Ramsey numbers involving matchings. They established that for all positive integers $m$ and $n$, $\hat{r}(nK_2,K_{1,m})=mn$, $\hat{r}(nK_2,P_4)=\cei{5n/2}$, and $\hat{r}(nK_2,G)=n\vert E(G)\vert $ for any connected graph $G$ with $\vert V(G)\vert \le 4$ except for $P_4$. Moreover, they showed $\hat{r}(nK_2,P_5)=3n$ if $n$ is even, and $\hat{r}(nK_2,P_5)=3n+1$ if $n$ is odd. They also explored the size Ramsey number of a matching versus a complete graph and proved that for $m\ge 4n-1$, we have $\hat{r}(nK_2,K_m)=\binom{m+2n-2}{2}$. If the matching has two edges, they showed the following theorem.

\begin{theorem}[Erd\H{o}s and Faudree~\cite{Erdoes1984Size}]\label{erdostheorem}
	For all $m, n \ge 2$, we have
	$$\hr(2K_2,K_m)= \begin{cases}\binom{m+2}{2}, & \text{for}\ m\ge 6, \\ 2\binom{m}{2}, & \text{for}\ 2\le m\le 5,\end{cases} \qquad\text{and}\qquad\hr(2K_2,K_{m,n})=mn+m+n.$$
\end{theorem}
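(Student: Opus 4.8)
The plan is to first reduce the arrow relation to two deletion conditions, then read off the upper bounds from explicit constructions, and finally attack the lower bounds by a minimal-graph plus edge-counting argument. For the reduction, observe that a red subgraph containing no $2K_2$ has matching number at most one, so it is contained either in the set $E(v)$ of edges at a single vertex $v$ or in the three edges $E(T)$ of a triangle $T$. Since enlarging the red set within a star or a triangle only shrinks the blue graph, it suffices to test these two extreme deletions; and because $K_m$ and $K_{m,n}$ have no isolated vertices, deleting $E(v)$ is the same as deleting $v$. Hence $G\to(2K_2,H)$ if and only if $G-v\supseteq H$ for every vertex $v$ and $G-E(T)\supseteq H$ for every triangle $T\subseteq G$.

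For the upper bounds I would exhibit explicit witnesses. For $K_m$, take $G=K_{m+2}$: deleting any vertex leaves $K_{m+1}\supseteq K_m$, while deleting the edges of a triangle $xyz$ and discarding $x,y$ leaves a $K_m$ on the remaining $m$ vertices; this gives $\hr(2K_2,K_m)\le\binom{m+2}{2}$. Alternatively take $G=2K_m$, two disjoint cliques, where any vertex or triangle deletion damages only one clique, giving $\hr(2K_2,K_m)\le 2\binom{m}{2}$. A direct computation shows $\binom{m+2}{2}\le 2\binom{m}{2}$ exactly when $m^2-5m-2\ge 0$, i.e.\ for $m\ge 6$, which is the source of the stated split. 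For $K_{m,n}$ take $G=K_{m+1,n+1}-e$ with $e=ab$; being bipartite it has no triangles, so only vertex deletions matter, and one checks $G-a\supseteq K_{m,n}$, $G-b\supseteq K_{m,n}$, and that for any other vertex $v$ dropping the lone endpoint of the missing edge on the larger side recovers a $K_{m,n}$. This yields $\hr(2K_2,K_{m,n})\le mn+m+n$.

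For the lower bounds I would take an edge-minimal $G$ with $G\to(2K_2,H)$; the reduction then guarantees $G-v\supseteq H$ for every $v$, so $G$ carries many overlapping copies of $H$. In the $K_{m,n}$ case I would fix one copy $H_0$ with parts $A,B$ (contributing $mn$ edges), delete some $a\in A$ to force a second copy avoiding $a$ and hence at least $n$ edges outside $H_0$, and delete some $b\in B$ to force at least $m$ further new edges, targeting the total $mn+m+n$. In the $K_m$ case I would start from a clique $S$, use vertex deletions to locate a second clique, and then invoke the triangle-deletion condition to decide between the heavily overlapping regime (which should force a $K_{m+2}$) and the disjoint regime (which should force two copies of $K_m$).

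The main obstacle is the bookkeeping in the lower bound: the copies produced by different deletions may overlap, so one must argue that the claimed ``new'' edges are genuinely new and not double-counted, which forces a case analysis on how the second (and third) copies meet the first and on $|V(G)|$. For $K_m$ the extra difficulty is showing that whenever $G$ is \emph{not} close to a union of two disjoint cliques it must already contain $K_{m+2}$ --- this is precisely where the triangle condition does the real work and where the crossover at $m=6$ is pinned down --- together with the degenerate small cases $m=2$ and small $m,n$, which I expect to require separate treatment.
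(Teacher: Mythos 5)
This statement is quoted by the paper from Erd\H{o}s and Faudree~\cite{Erdoes1984Size} and is never proved in the paper itself, so your attempt can only be judged on its own merits. The first half of your argument is sound: the reduction of $G\to(2K_2,H)$ to the two deletion conditions (a $2K_2$-free red graph is a subgraph of a star or of a triangle, so it suffices to test the maximal ones, and star-deletion equals vertex-deletion since $H$ has no isolated vertices) is correct, and all three constructions --- $K_{m+2}$, $2K_m$, and $K_{m+1,n+1}-e$ --- do arrow the respective targets, giving the claimed upper bounds; the arithmetic locating the crossover $\binom{m+2}{2}\le 2\binom{m}{2}$ exactly at $m\ge 6$ is also right.

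The genuine gap is the lower bounds, which are the substantial half of the theorem and which you leave as a plan rather than a proof. Indeed, the counting you sketch demonstrably falls short in the bipartite case: fixing a copy $H_0$ with parts $A,B$, deleting $a\in A$ forces a copy $H_1$ with at least $n$ edges outside $E(H_0)$, and deleting $b\in B$ forces a copy $H_2$ with at least $m$ edges outside $E(H_0)$; but the new edges of $H_2$ need only avoid $b$, so they may all coincide with the new edges of $H_1$, and the union bound yields only $mn+n+\max\{0,\,m-n\}$ edges, strictly less than $mn+m+n$ whenever $n\ge m$. Closing this requires a different bookkeeping (for instance choosing the deleted vertices adaptively, or exploiting that $d(v)\le |E(G)|-mn$ for every vertex $v$), not merely more care with the same count. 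The clique case is in worse shape: what must be shown is that \emph{every} arrowing graph has at least $\min\bigl\{\binom{m+2}{2},\,2\binom{m}{2}\bigr\}$ edges, not that it contains $K_{m+2}$ or $2K_m$; you give no argument excluding graphs that are close to neither, and this is exactly where the triangle-deletion condition and the case analysis must do the real work, and where the split at $m=6$ is actually decided on the lower-bound side. As it stands, your proposal establishes only the inequalities $\hr(2K_2,K_m)\le\min\bigl\{\binom{m+2}{2},\,2\binom{m}{2}\bigr\}$ and $\hr(2K_2,K_{m,n})\le mn+m+n$.
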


If both $G_1$ and $G_2$ are small-order graphs, Faudree and Sheehan~\cite{Faudree1983Size} listed the size Ramsey numbers for all pairs of graphs with at most four vertices; and Lortz and Mengersen~\cite{Lortz2021Size} determined the size Ramsey number $\hr(P_3,H)$ for all graphs $H$ of order five.

The fan $F_n$ is a graph of order $2n+1$ with $n$ triangles sharing a common vertex. It is one of the most studied graphs in graph Ramsey theory. For Ramsey numbers involving fans, we refer the reader to Radziszowski's dynamic survey~\cite{Radziszowski2021Small}. Baskoro~et al.~\cite{Baskoro2006Upper} investigated the size Ramsey number of $P_3$ versus a fan and proved the upper bound $\hr(P_3,F_n)\le 6n+2$. They conjectured that this upper bound is also a lower bound (and hence equality holds). Our first result gives the exact value of $\hat{r}(P_3, F_n)$ for every positive integer $n$ and, as a by-product, disproves Baskoro~et al.'s conjecture.

\begin{theorem}\label{pathfan} For every positive integer $n$, we have
	$$\hr(P_3,F_n)= \begin{cases}10, & \text{if}\ n=2; \\4n+4, & \text{if}\ n\ \text{is odd}; \\ 4n+5, & \text{if}\ n\ \text{is even and}\ n\not=2. \end{cases}$$
\end{theorem}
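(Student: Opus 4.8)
The plan is to work throughout with the following reformulation. A red subgraph contains no $P_3$ exactly when its edges form a matching, so $\hr(P_3,F_n)$ is the least $m$ for which some graph $G$ with $m$ edges satisfies: $G-M$ contains $F_n$ for \emph{every} matching $M\subseteq E(G)$. Recall that a copy of $F_n$ in a graph $H$ is precisely a vertex $c$ (the hub) together with a matching of size $n$ among the neighbours of $c$. Hence I want graphs in which, after deleting any matching, some vertex still has $2n$ neighbours spanning a matching of size $n$. I would record at the outset the elementary observation that a would-be hub of degree only $2n$ is destroyed as soon as the adversary puts one of its spokes into $M$; so any surviving hub must either have degree at least $2n+1$ or belong to a robust family of candidate hubs.

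For the upper bound I would exhibit explicit extremal graphs: one with $4n+4$ edges for odd $n$, one with $4n+5$ edges for even $n\ge 4$, and an ad hoc $10$-edge graph for $n=2$ (here $K_5$ already works, which is why $n=2$ falls below the general formula). Each general construction is a hub $c$ joined to a carefully chosen \emph{rim} $R$ that is matching-robust in the strong sense that, after deleting any single rim vertex (modelling the spoke the adversary may destroy) together with any matching of rim edges, $R$ still contains a matching of size $n$. I would then verify $G\to(P_3,F_n)$ by splitting into cases according to whether $M$ uses a spoke at $c$ and how $M$ meets $R$, and invoking this robustness. The parity of $n$ enters exactly here: the cheapest robust rim is governed by a parity obstruction for near-perfect matchings (the same phenomenon that makes an even path lose a matching edge but an odd path not), which costs one extra rim edge when $n$ is even.

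For the lower bound I would assume $G\to(P_3,F_n)$ and bound $|E(G)|$ from below. Starting from the degree observation above, I would first argue that $G$ must contain a hub $c$ of degree at least $2n+1$ whose neighbourhood is matching-robust enough to survive the adversary's best response; the delicate point is that $M$ is global, so I must handle the possibility that several vertices compete to be the hub and that $G$ has edges leaving $N(c)$. I would then charge the $\ge 2n+1$ spokes at $c$ and separately lower-bound the number of rim edges forced by robustness, the latter being a matching-theoretic estimate proved via the Gallai--Edmonds / Berge--Tutte structure (or directly, by producing a matching $M'$ that collapses the matching number of a deficient rim below $n$). Combining the spoke and rim counts, and tracking the same near-perfect-matching parity as in the construction, should yield $|E(G)|\ge 4n+4$ for odd $n$ and $\ge 4n+5$ for even $n\ge 4$, matching the constructions.

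The main obstacle I expect is the lower bound, and within it the exact rim-edge estimate together with its parity dichotomy. Getting the additive constant right requires ruling out all graphs, not only hub-and-rim ones: I must show that spreading high degree over several vertices, or using edges outside the neighbourhood of any would-be hub, cannot save an edge. Equivalently, the technical heart of both directions is the same extremal statement---determining the minimum size of a rim $R$ for which $R$ minus any vertex stays $n$-matching-robust, and establishing the parity of this optimum. Finally, the small-$n$ anomaly $n=2$ must be excluded from the general estimates and treated separately by hand.
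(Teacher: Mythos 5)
Your reformulation (a red $P_3$-free subgraph is exactly a red matching) and your high-level architecture---hub-plus-robust-rim constructions for the upper bound, and a reduction of the lower bound to an extremal problem about matching-robust neighbourhoods---do coincide with what the paper actually does: its extremal graphs are $K_1+kC_4$ for $n=2k-1$ and $K_1+(kC_4\cup P_3)$ for $n=2k$, and its lower bound pins down the structure of $G[N(v)]$. But your proposal stops at the architecture, and every step that carries actual mathematical weight is deferred. For the upper bound you never exhibit the rim; ``carefully chosen'' and ``governed by a parity obstruction'' is not a construction. The content here is that disjoint copies of $C_4$ buy robust matching edges in pairs at cost $4$ per edge (because $C_4\to(P_3,2K_2)$ and every spoke but one survives the red matching), while the odd leftover edge needed when $n$ is even must be bought by an extra $P_3$ at cost $5$; that is where $4n+4$ versus $4n+5$ comes from, and it has to be written down and verified.

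The more serious gap is the lower bound, which you yourself identify as the main obstacle and then do not address. Three concrete pieces are missing. First, uniqueness of the candidate hub: the paper proves that a graph with $4n+3+\epsilon$ edges arrowing $(P_3,F_n)$ has exactly one vertex of degree at least $2n$, via an adaptive adversary that colors the rim edges of successive fans red and recounts edges; your text only remarks that competing hubs are ``delicate.'' Second, the reduction to a rim problem is not merely ``charge spokes and rim edges'': one must control the structure of $G[N(v)]$, and the paper does this through a carefully chosen vertex $u\in N(v)$ (with a tie-breaking rule preferring $u$ for which $G[N(v)\setminus u]$ has a component other than $C_4$ or $P_5$), proving $3\le d(u)\le 4$ and $\Delta(G[N(v)\setminus u])\le 2$. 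Third, and decisively, the quantitative core: the adversary's red matching inside $G[N(v)\setminus u]$ is the periodic ``one red edge in every three'' coloring along each cycle and path, which limits the blue matching of a component of length $\ell$ to $\lceil \ell/3\rceil$ and yields the counting inequality
$$\sum_{i=1}^{s}\bigl(4\lceil c_i/3\rceil-2c_i\bigr)+\sum_{j=1}^{t}\bigl(4\lceil p_j/3\rceil-2p_j-1\bigr)+3+\epsilon\ \ge\ d(u),$$
forcing $G[N(v)\setminus u]$ to be a disjoint union of $C_4$'s (plus possibly one short path when $n$ is even); the final contradiction then comes precisely from the tie-breaking choice of $u$. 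Your proposed substitute---a Gallai--Edmonds/Berge--Tutte estimate---is never formulated as a lemma, let alone proved, so there is no way to check that it delivers the exact constants $4n+4$ and $4n+5$ rather than these values up to an additive error; likewise the $n=2$ lower bound of $10$ is left ``ad hoc'' with no argument. As it stands the proposal is a plausible research plan whose correct skeleton matches the paper, but it is not a proof.
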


Turning to a matching with two edges versus a fan, we know that $\hr(2K_2,F_1)=\hr(2K_2,K_3)=6$ by Theorem \ref{erdostheorem}. For $n\ge 2$, we determine  $\hr(2K_2,F_n)$ completely.

\begin{theorem}\label{matchingfan}For every positive integer $n$, we have
	$$\hr(2K_2,F_n)= \begin{cases}6n, & \text{if}\ 1\le n\le 2; \\ 5n+3, & \text{if}\ n\ge 3. \end{cases}$$
\end{theorem}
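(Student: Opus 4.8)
The plan is to determine $\hat{r}(2K_2,F_n)$ for $n\ge 2$ by proving matching upper and lower bounds. The most delicate case is $n=2$, where the answer $12$ breaks the pattern $5n+3$; I would handle the generic range $n\ge 3$ first and treat $n=2$ separately. For the upper bound in the generic case, I would exhibit an explicit host graph $G$ with $5n+3$ edges satisfying $G\to(2K_2,F_n)$. A natural candidate is built around a fan-like core: take $F_n$ itself (which has $3n$ edges) and augment it with a small gadget so that any red-blue coloring avoiding a red $2K_2$ is forced to leave a blue $F_n$. The key structural observation is that a coloring with no red $2K_2$ means the red edges form a graph with no matching of size two, i.e., by the classical characterization the red edges are either a star (all sharing a common vertex) or a triangle. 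This dichotomy is the engine of the whole argument: if red is a star centered at $v$, then deleting $v$ removes all red edges and leaves everything blue, so $G-v$ must still contain $F_n$ in blue; if red is a triangle, removing its three vertices must leave $F_n$ in blue.

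\medskip

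\noindent\textbf{Upper bound.} First I would design $G$ so that $G$ minus any single vertex still contains $F_n$, and $G$ minus any three vertices spanning a triangle still contains $F_n$. The count $5n+3$ suggests the right host is $F_n$ plus roughly $2n+3$ extra edges providing redundancy. Concretely I would take the hub of the fan together with several ``spare'' triangles and spare spokes, arranged so that no single vertex (and no triangle) is incident to too many of the $n$ required triangles. I would then verify the two forcing conditions above directly from the star/triangle dichotomy: for the star case one checks that $G-v$ still has a blue $F_n$ for every choice of center $v$ (the worst case being $v$ the hub, which is why the gadget must supply an alternate hub with enough triangles); for the triangle case one checks that deleting any three mutually adjacent vertices cannot destroy all $n$ triangles of the fan simultaneously.

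\medskip

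\noindent\textbf{Lower bound.} For $\hat{r}(2K_2,F_n)\ge 5n+3$, I would assume for contradiction a host graph $G$ with at most $5n+2$ edges and $G\to(2K_2,F_n)$, and construct a good coloring. Using the star/triangle dichotomy in reverse, it suffices to find a single vertex $v$ (or a triangle $T$) such that $G-v$ (resp. $G-T$) contains no $F_n$; coloring the edges at $v$ red and the rest blue then defeats $G$. So the lower bound reduces to a counting/extremal statement: every graph $G$ that survives vertex-deletion and triangle-deletion while still containing $F_n$ must have at least $5n+3$ edges. I would bound the edges contributed by the $n$ triangles of the surviving fan together with the edges forced by the redundancy conditions, playing off the hub degree against the number of spokes; the ``$+3$'' should emerge from the triangle-deletion requirement forcing three extra edges beyond the naive $5n$.

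\medskip

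The main obstacle I anticipate is the lower bound bookkeeping: translating ``no single vertex and no triangle destroys all copies of $F_n$'' into a sharp edge count of $5n+3$ requires carefully handling graphs where many near-fans overlap, so that the redundancy cannot be provided cheaply by shared edges. The small case $n=2$ will need a separate finite analysis because $F_2$ is small enough that ad hoc host graphs with fewer than $13$ edges can be ruled out only by direct case-checking, which explains why its value $12=6n$ sits outside the $5n+3$ formula.
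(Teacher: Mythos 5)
Your overall skeleton (the star-or-triangle characterization of red graphs with no $2K_2$, an explicit host for the upper bound, a defeating coloring for the lower bound) matches the paper's approach, but there is a genuine gap at the heart of both halves: you conflate deleting the three \emph{vertices} of a red triangle with deleting its three \emph{edges}. If the red edges form a triangle $T$, the blue graph is $G-E(T)$, which properly contains $G-V(T)$. For the upper bound this is fatal to sharpness: your forcing condition ``$G$ minus any three mutually adjacent vertices still contains $F_n$'' is strictly stronger than what arrowing requires, and it cannot be met within a budget of $5n+3$ edges. The paper's extremal host consists of two hubs $u,v$ both joined to a common matching $x_1y_1,\dots,x_ny_n$, plus one extra triangle $uu_1u_2$ (total $5n+3$ edges); deleting the three vertices $u,x_1,y_1$ leaves only an $F_{n-1}$ at $v$ together with an isolated edge, so this host violates your condition --- yet it does arrow, precisely because a red triangle $ux_1y_1$ forbids only its three edges, leaving the blue fan at $u$ on the triangles $ux_jy_j$ ($j\ge 2$) and $uu_1u_2$. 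Moreover, a $5n+3$ edge budget essentially forces two fans sharing their rim matching, and then the vertex set of one shared triangle meets a triangle of every fan simultaneously; repairing this under your vertex-deletion requirement costs extra edges (the natural graphs satisfying it, e.g.\ two hubs both joined to $n+1$ common rim pairs, have $5n+5$ edges). So your route proves only $\hat{r}(2K_2,F_n)\le 5n+5$. The same conflation makes the lower-bound reduction unsound: finding a triangle $T$ with $G-V(T)\not\supseteq F_n$ does not defeat $G$, since coloring $E(T)$ red leaves all of $G-E(T)$ blue, which may still contain $F_n$. The correct dichotomy is: $G\to(2K_2,F_n)$ if and only if $G-v\supseteq F_n$ for every vertex $v$ and $G-E(T)\supseteq F_n$ for every triangle $T$.

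Beyond this, the lower-bound sketch does not yet contain an argument. The paper's proof, after the observation that $G-x$ must contain $F_n$ for every vertex $x$ (which you do have), takes a maximum-degree vertex $v$, fixes a fan $H_1$ with center $u$ inside $G-v$, and splits into cases $d(v)\ge 2n+3$, $2n+1\le d(v)\le 2n+2$, and $d(v)=2n$, each requiring a tailored red coloring: in the middle case a red triangle through $v$ and a rim edge of $H_1$, and in the last case the red star at a common neighbor of $u$ and $v$. None of this bookkeeping is recoverable from ``the $+3$ should emerge from the triangle-deletion requirement,'' especially since, as noted above, that requirement is not a consequence of arrowing. Two smaller points: the theorem also covers $n=1$ (the paper quotes $\hat{r}(2K_2,K_3)=6$ from Erd\H{o}s and Faudree), and for $n=2$ the optimal host is the \emph{disconnected} graph $2F_2$ with $12$ edges, so a ``fan core plus gadget'' template cannot find it; also, the lower bound there must rule out hosts with fewer than $12$ edges, not fewer than $13$ --- ruling out all hosts below $13$ edges would contradict $2F_2\to(2K_2,F_2)$.
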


Even though the problem of determining the size Ramsey number $\hr(P_3,P_n)$ is far from being settled, one can easily show that $\hr(2K_2,P_n)=n+1$ for $n\ge 3$. Moreover, Vito and Silaban~\cite{Vito2022Two} calculated $\hr(2K_2,nP_m)$ for $2\le n\le 4$ and $m\ge 3$, where $nP_m$ is the disjoint union of $n$ copies of $P_m$. They conjectured that $\hr(2K_2,nP_m)=\min\{nm+1, (n+1)(m-1)\}$ for $n\ge 5$ and $m\ge 3$. Our third result confirms this conjecture.

\begin{theorem}\label{matchingpath} For all positive integers $n$ and $m$ with $m\ge 2$,
	$$\hr(2K_2,nP_m)=\min\{nm+1, (n+1)(m-1)\}.$$
\end{theorem}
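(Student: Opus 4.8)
The plan is to reduce the arrowing relation to a pair of \emph{deletion conditions} and then analyze the component structure of a minimal arrowing graph. The starting observation is that a red subgraph contains no $2K_2$ precisely when its edges pairwise intersect, i.e. when the red subgraph is a star or a triangle. Consequently $G\to(2K_2,nP_m)$ holds if and only if (a) $G-v$ contains $nP_m$ for every vertex $v$, and (b) $G-E(T)$ contains $nP_m$ for every triangle $T$ of $G$: deleting more edges only makes a blue $nP_m$ harder to find, so it suffices to test the maximal red stars (the full star at each vertex, whose blue complement is $G-v$) and the triangles. I will use both conditions for the upper bound but, pleasantly, only condition (a) for the lower bound.

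For the upper bound I will exhibit two graphs. First, $(n+1)P_m$ has $(n+1)(m-1)$ edges, is triangle-free, and deleting any single vertex still leaves $n$ untouched copies of $P_m$; hence it arrows. Second, the cycle $C_{nm+1}$ has $nm+1$ edges and, whenever $nm+1\ge 4$, is triangle-free; deleting any vertex yields the path $P_{nm}$, which splits into $n$ consecutive blocks and thus contains $nP_m$. Since the cycle is only needed when $nm+1\le(n+1)(m-1)$, equivalently $m\ge n+2$, in that range $nm+1\ge 4$ and the triangle condition is vacuous. Taking the better of the two bounds gives $\hr(2K_2,nP_m)\le\min\{nm+1,(n+1)(m-1)\}$.

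For the lower bound, suppose $G\to(2K_2,nP_m)$ and write its components as $C_1,\dots,C_t$. Let $f(\cdot)$ denote the maximum number of vertex-disjoint copies of $P_m$, and set $a_j=f(C_j)$ and $b_j=\min_{v\in C_j} f(C_j-v)$; since deleting one vertex can destroy at most one path of an optimal packing, $a_j-1\le b_j\le a_j$. As $f$ is additive over components, condition (a) reads $\sum_j a_j-\max_j(a_j-b_j)\ge n$. If some component is \emph{critical}, i.e. $a_j-b_j=1$, then $\sum_j a_j\ge n+1$, so $G\supseteq(n+1)P_m$ and $|E(G)|\ge(n+1)(m-1)$. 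Otherwise every component is \emph{robust} ($a_j=b_j$) and $\sum_j a_j\ge n$.

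The heart of the argument is a sharp edge bound for a robust component $C$ with $a:=f(C)\ge 1$. First, $C$ has at least $am+1$ vertices: if its $am$-vertex optimal packing were spanning, deleting an endpoint of one packed path would leave only $am-1$ vertices, too few for $aP_m$, contradicting robustness. To upgrade this to $|E(C)|\ge am+1$ I distinguish whether $C$ contains a cycle: if it does, then $|E(C)|\ge|V(C)|\ge am+1$; if $C$ is a tree, a leaf-isolation argument forces an extra vertex, for were $|V(C)|=am+1$, deleting the neighbor of a leaf would isolate that leaf and leave only $am-1$ usable vertices, again too few (here $m\ge2$ is used, so that an isolated vertex is no $P_m$); hence a robust tree has $|V(C)|\ge am+2$ and $|E(C)|=|V(C)|-1\ge am+1$. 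Summing over the robust components with $a_j\ge1$ (at least one exists since $\sum_j a_j\ge n\ge1$) yields $|E(G)|\ge m\sum_j a_j+1\ge nm+1$. In either case $|E(G)|\ge\min\{nm+1,(n+1)(m-1)\}$, matching the upper bound. I expect the delicate point to be exactly this robust-component bound, namely extracting the extra $+1$ by excluding the tree case, while the reduction to (a)–(b) and the two extremal constructions are routine.
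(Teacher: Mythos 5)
Your proof is correct, and its lower bound takes a genuinely different route from the paper's. The paper's proof leans on two cited results of Vito and Silaban: the upper-bound constructions (the same $C_{nm+1}$ and $(n+1)P_m$ you use), and, crucially, the lemma that no \emph{connected} graph with $nm$ edges arrows $(2K_2,nP_m)$. The paper's own contribution is the reduction of a disconnected arrowing graph to that lemma: it forces $\sum_i a_i=n$ exactly, deduces $G_i\to(2K_2,a_iP_m)$ for every component, and applies the cited lemma componentwise to reach $\vert E(G)\vert\ge nm+t\ge nm+2$, a contradiction. You bypass that lemma entirely. Your critical/robust dichotomy (note that per-component arrowing in the paper's argument is exactly your robustness condition) replaces it with a direct structural bound: a robust component $C$ with $f(C)=a\ge 1$ has at least $am+1$ edges, proved by the vertex-count argument plus the cycle/tree split with the leaf-isolation trick. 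This is the genuinely new ingredient; it is weaker than the cited connected-case lemma (in the critical case you only get $(n+1)(m-1)$, not $nm+1$), but it is exactly strong enough, and it makes the whole lower bound self-contained and uniform over connected and disconnected graphs. Two smaller differences: you handle $m=2$ inside the same argument (the paper treats it separately via $\hr(n_1K_2,n_2K_2)=n_1+n_2-1$), and you verify the upper-bound constructions yourself through the star-or-triangle characterization of $2K_2$-free red subgraphs rather than citing them. What each approach buys: the paper's is shorter modulo its citations; yours is longer but independent of the external connected-case result, and the robust-component edge bound is a clean statement of independent interest.
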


We will prove Theorems \ref{pathfan}, \ref{matchingfan}, and \ref{matchingpath} in Sections \ref{section2}, \ref{section3}, and \ref{section4}, respectively. We recall some more notation at the end of this section. The graph $G[N(v)]$ is the subgraph of $G$ induced by the neighbors of $v$. The graph $G-v$ is the subgraph obtained from $G$ by deleting the vertex $v$ and all edges incident to $v$. We use $nG$ to denote the disjoint union of $n$ copies of $G$. The graph $K_1+nG$ is obtained by adding one new vertex to $nG$ and joining it to every vertex of $nG$. This additional vertex is called the \emph{center} of $K_1+nG$. A cycle of order $n$ is denoted by $C_n$. A wheel $W_n$ is the graph $K_1+C_n$, and a fan $F_n$ is the graph $K_1+nK_2$. The $2n$ edges of $F_n$ that are incident to $v$ are called \emph{spokes}, and their set is denoted by $E_1(F_n)$; the $n$ edges that are not incident to $v$ are called \emph{rim edges}, and their set is denoted by $E_2(F_n)$.

\section{Proof of Theorem \ref{pathfan}}\label{section2}

\begin{figure}[htbp]
	\centering
	\begin{minipage}{0.4\linewidth}
		\centering
		\includegraphics[width=1\textwidth]{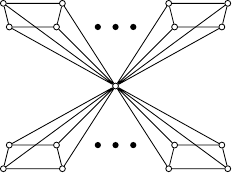}
		\caption{The graph $K_1+nC_4$}
	\end{minipage}
	\hspace{50pt}
	\begin{minipage}{0.4\linewidth}
		\centering
		\includegraphics[width=1\textwidth]{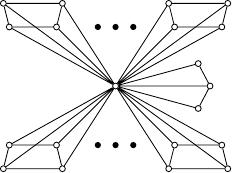}
		\caption{The graph $K_1+(nC_4\cup P_3)$}
	\end{minipage}
\end{figure}

Notice that $\hr(P_3,F_1)=\hr(P_3,K_3)=8$ by Theorem \ref{faudreetheorem}, and $\hr(P_3,F_2)=10$ by Lortz and Mengersen~\cite{Lortz2021Size}. So we assume that $n\ge 3$. To simplify the notation, let us restate the upper bounds as $\hr(P_3,F_{2n-1})\le 8n$ and $\hr(P_3,F_{2n})\le 8n+5$. We first consider the graph formed by $n$ copies of $W_4$ sharing a common center, denoted by $K_1+nC_4$. We show that $K_1+nC_4\to (P_3, F_{2n-1})$. For any red-blue edge-coloring of $K_1+nC_4$, suppose there is no red $P_3$, we need to find a blue $F_{2n-1}$. Let $v$ be the center of $K_1+nC_4$. Then $v$ is incident to at most one red edge, since otherwise there is a red $P_3$. It follows from $C_4\to (P_3,2K_2)$ that $nC_4\to (P_3,(2n)K_2)$. In other words, the graph $nC_4$ contains a blue matching with $2n$ edges, which together with $v$ forms an $F_{2n}$. This fan has at most one red edge, which is incident to $v$. Thus, there is a blue $F_{2n-1}$. It follows that $\hr(P_3,F_{2n-1})\le 8n$. To prove $\hr(P_3,F_{2n})\le 8n+5$, we add a new $P_3$, say $v_1v_2v_3$, to the graph $K_1+nC_4$, and join $v$ to $v_1,v_2,v_3$, respectively. From the facts $C_4\to (P_3,2K_2)$ and $P_3\to (P_3,K_2)$, we see that $nC_4\cup P_3\to (P_3,(2n+1)K_2)$. Hence, there is a blue matching $(2n+1)K_2$ in $G[N(v)]$, which together with $v$ forms an $F_{2n+1}$. It contains at most one red edge, which is incident to $v$. So there is a blue $F_{2n}$. Thus, $\hr(P_3,F_{2n})\le 8n+5$.

Now we prove $\hr(P_3,F_n)\ge 4n+4+\epsilon$ for $n\ge 3$. Here, $\epsilon$ is a Kronecker delta function, which means that $\epsilon=0$ if $n$ is odd, and $\epsilon=1$ if $n$ is even. For any graph $G$ with $4n+3+\epsilon$ edges, we will find an edge-coloring of $G$ such that $G$ contains neither a red $P_3$ nor a blue $F_n$. When coloring the edges of $G$ in the sequel, we guarantee that no red $P_3$ will appear in $G$. Suppose to the contrary that $G$ contains a blue $F_n$, we can always find a contradiction. We first have the following claim on the vertex with maximum degree.

\begin{claim}\label{centerclaim}
	The graph $G$ contains exactly one vertex whose degree is at least $2n$.
\end{claim}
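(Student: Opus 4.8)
The plan is to argue by contradiction, assuming $G\to(P_3,F_n)$, so that every colouring whose red edges form a matching produces a blue $F_n$. The device I would build everything on is the observation that a red-blue colouring of $G$ has no red $P_3$ precisely when its set of red edges is a matching $M$, in which case the blue graph is $G-M$. Thus $G\not\to(P_3,F_n)$ is equivalent to the existence of a matching $M\subseteq E(G)$ with $G-M$ being $F_n$-free, and the contradiction hypothesis becomes: \emph{for every matching $M$ of $G$, the graph $G-M$ still contains $F_n$}. Since a copy of $F_n$ needs a centre of degree at least $2n$, and since deleting a matching lowers every degree by at most one, the only vertices that can serve as a fan centre in any $G-M$ are those of degree at least $2n$ in $G$. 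The easy half of the claim is now immediate: taking $M=\emptyset$ forces $G\supseteq F_n$, so $G$ has a vertex of degree at least $2n$; equivalently, if the maximum degree were at most $2n-1$ then $G$ would be $F_n$-free and the all-blue colouring would already be a valid colouring.

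For the other half I would suppose that two distinct vertices $u,v$ each have degree at least $2n$ and derive a contradiction by exhibiting a single matching $M$ for which $G-M$ is $F_n$-free. The first ingredient is edge counting. From $\deg u+\deg v-[uv]\le|E(G)|=4n+3+\epsilon$ (where $[uv]$ is $1$ if $uv\in E(G)$ and $0$ otherwise) one gets that the surplus $\deg u+\deg v-4n$ is at most $4+\epsilon$, and that the number of edges meeting neither $u$ nor $v$ equals $3+\epsilon-(\deg u+\deg v-4n)+[uv]$. A second count shows that three vertices of degree at least $2n$ would require at least $6n-3$ edges, which exceeds $4n+4$ for $n\ge4$; hence for $n\ge4$ the vertices $u$ and $v$ are the only possible fan centres, and it suffices to destroy the fans at $u$ and at $v$ simultaneously.

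To handle a single prospective centre, say $u$, I would use the dichotomy that $G-M$ has a fan centred at $u$ exactly when the subgraph of $G-M$ induced on the $(G-M)$-neighbours of $u$ has a matching of size $n$. If $\deg u=2n$, placing one edge at $u$ into $M$ drops its degree to $2n-1$ and removes every fan centred at $u$ outright. If instead $\deg u\ge 2n+1$, then a matching of size $n$ among the neighbours of $u$ could contain at most one edge at $v$, hence at least $n-1$ of its edges meet neither $u$ nor $v$; but the count above caps that quantity by $3+\epsilon\le4$, forcing $n\le5$. Consequently, for $n\ge6$ a vertex of surplus degree cannot be a fan centre at all, and choosing the edges deleted at $u$ and at $v$ to be vertex-disjoint (possible since each of $u,v$ has at least $2n\ge6$ neighbours) assembles a matching $M$ with $G-M$ entirely fan-free, the desired contradiction. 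The finitely many values $n\in\{3,4,5\}$, as well as the possibility of a third dense vertex that the counting rules out only for $n\ge4$ (so effectively only at $n=3$), I would dispose of by direct case analysis.

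The step I expect to be the real obstacle is exactly this surplus regime together with the small values $n\in\{3,4,5\}$. When a prospective centre has degree strictly above $2n$, no single matching edge can push it below the threshold, so one must instead break the size-$n$ matching living inside its neighbourhood; this forces one to track how the two dense vertices share neighbours—an edge incident to $v$ can simultaneously be a rim edge of a fan centred at $u$—and to account for the handful of edges avoiding both. Certifying that one matching $M$ kills the fans at $u$ and at $v$ at once, rather than destroying them separately, is where the care is needed.
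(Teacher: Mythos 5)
Your reduction (no red $P_3$ $\Leftrightarrow$ the red edges form a matching $M$, so the claim amounts to exhibiting one matching $M$ with $G-M$ fan-free) is exactly the paper's framework, and the part of the argument you complete is sound: for $n\ge 4$ there is no third vertex of degree $\ge 2n$, a center of degree exactly $2n$ is killed by one matching edge, and a center of degree $\ge 2n+1$ cannot support a fan once $n$ is large, because $\ge n-1$ of its rim edges would have to avoid both $u$ and $v$ while only $3+\epsilon-(\deg u+\deg v-4n)+[uv]$ such edges exist. But the proposal has a genuine gap: the cases you defer to ``direct case analysis,'' $n\in\{3,4,5\}$, are not a finite check --- $G$ ranges over all graphs with $4n+3+\epsilon$ edges --- and they sit exactly where your mechanism fails. (Tracking parity in your own count actually closes $n=5$, since $\epsilon=0$ there gives $n-1\le 3$; the real gap is $n\in\{3,4\}$, where a surplus-degree center with a fan is consistent with the edge count, and at $n=3$ even a third vertex of degree $\ge 2n$ can exist, e.g.\ a triangle with four pendant edges at each corner has $15=4n+3$ edges and three vertices of degree $6$.) You identify this regime as ``the real obstacle'' but give no argument for it, so the claim is not proved for all $n\ge 3$.

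The idea you are missing is the paper's \emph{adaptive} coloring, which avoids the degree dichotomy entirely and works uniformly for $n\ge 3$. First color $uv$ red if it exists; any fan centered outside $\{u,v\}$ forces $|E(G)|\ge 3n+(2n-2)+(2n-2)-1=7n-5>4n+3+\epsilon$, so every fan is centered at $u$ or $v$. Now take an actual fan $G_1$ among the uncolored edges, say centered at $u$, and color its $n$ rim edges red: since $uv$ is already red or absent, $v\notin V(G_1)$, so these rim edges together with $uv$ still form a matching --- and, crucially, they destroy every fan containing them \emph{without} needing to push $\deg u$ below $2n$. If an uncolored fan $G_2$ still exists, then either its center is $u$, so it avoids the $n$ red rim edges and all $\ge 2n$ edges at $v$, giving $|E(G)|\ge 3n+n+2n=6n$; or its center is $v$, so it avoids all $3n$ edges of $G_1$, giving $|E(G)|\ge 6n$. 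Both exceed $4n+3+\epsilon$ for every $n\ge 3$, so the process stops with no uncolored fan, and coloring the rest blue yields the contradiction. Coloring the rim edges of a discovered fan, rather than constructing $M$ up front from degree information, is precisely what handles your surplus and small-$n$ cases.
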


\begin{proof}
	Suppose that there are two vertices $u$ and $v$ such that $d(u)\ge 2n$ and $d(v)\ge 2n$. We will find a matching and color its edges red in the sequel. Hence, we always assume that the graph induced by all uncolored edges contains an $F_n$. This is because, if such an $F_n$ does not exist at some step, we color all uncolored edges blue and a contradiction follows.
	
	If there is an edge joining $u$ and $v$, we color it red. Let $G_1$ be an $F_n$ all of whose edges are uncolored. If the center of $G_1$ is not $u$ or $v$, then at least $2n-2$ edges incident to $u$ are not in $G_1$, and so are the edges incident to $v$. Since $uv$ is a possible edge, the size of $G$ is at least $\vert E(G_1)\vert +(2n-2)+(2n-2)-1$, which is greater than $4n+3+\epsilon$ for $n\ge 3$, a contradiction. Thus, every copy of $F_n$ has center either $u$ or $v$. Without loss of generality, let $u$ be the center of $G_1$. We color all its rim edges red. Now let $G_2$ be an $F_n$ all of whose edges are uncolored. Recall that we have already colored the edge $uv$ red if it exists. If the center of $G_2$ is $u$, then any edge incident to $v$ is not contained in $G_2$. The size of $G$ is at least $\vert E(G_2)\vert +n+2n$, which contradicts the fact that $\vert E(G)\vert =4n+3+\epsilon$. If the center of $G_2$ is $v$, then any edge of $G_1$ is not contained in $G_2$. The size of $G$ is at least $\vert E(G_1)\vert +\vert E(G_2)\vert $, again a contradiction.
\end{proof}

Let $v$ be the vertex with $d(v)=\Delta(G)$. By Claim \ref{centerclaim}, the center of $F_n$ has to be $v$ for any copy of $F_n$ in $G$. Among all vertices with maximum degrees in the graph $G[N(v)]$, we choose such a vertex $u$ that $G[N(v)\setminus u]$ contains a component that is neither a $C_4$ nor a $P_5$. If such a vertex does not exist, then we choose an arbitrary vertex as $u$ which has maximum degree in $G[N(v)]$. The vertex $u$ has the following properties.

\begin{claim}\label{du1}
	The vertex $u$ has at least two neighbors in $G[N(v)]$.
\end{claim}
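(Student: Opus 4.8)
The plan is to argue by contradiction, exploiting the maximality built into the choice of $u$. Suppose $u$ has at most one neighbor in $G[N(v)]$. Since $u$ was selected to have maximum degree in $G[N(v)]$, every vertex of $G[N(v)]$ then has degree at most one, so $G[N(v)]$ is a disjoint union of isolated vertices and independent edges, i.e.\ a matching. This structural collapse is the whole point: once $G[N(v)]$ is a matching, its edges (the only candidate rim edges of any fan centered at $v$) are pairwise vertex-disjoint, which lets me color them all red at once without creating a red $P_3$.

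Concretely, I would exhibit the coloring that colors every edge of $G[N(v)]$ red and every remaining edge of $G$ blue, and then show it defeats the arrowing. The red edges form a matching, so no two share a vertex and there is no red $P_3$; this is therefore an admissible coloring. Following the paper's device, suppose to the contrary that this coloring contains a blue $F_n$. By Claim~\ref{centerclaim} the only vertex of degree at least $2n$ is $v$, so that fan must be centered at $v$. Its $n$ rim edges then join pairs of neighbors of $v$ and hence all lie in $G[N(v)]$ --- but every edge of $G[N(v)]$ has been colored red, so none of the rim edges can be blue, a contradiction. Thus the coloring has neither a red $P_3$ nor a blue $F_n$, so $G\not\to(P_3,F_n)$; since we are committed to finding such a coloring, we may assume instead that $u$ has at least two neighbors in $G[N(v)]$, which is exactly the claim.

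I expect almost no obstacle here: the argument rests on only two ingredients, namely the maximality of $u$ (which upgrades ``$u$ has at most one neighbor'' to ``$G[N(v)]$ is a matching'') and Claim~\ref{centerclaim} (which pins every fan's center at $v$ and thereby confines its rim edges to $G[N(v)]$). The one point worth stating explicitly is that the rim edges of a fan centered at $v$, being edges between two neighbors of $v$, indeed belong to $G[N(v)]$; granting this, coloring all of $G[N(v)]$ red simultaneously annihilates every possible blue fan, and no edge-counting is needed. The real work of the section will come in the later claims, where $G[N(v)]$ is no longer a matching and the coloring must be built more delicately.
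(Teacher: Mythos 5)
Your proof is correct and follows essentially the same route as the paper: both reduce the hypothesis to $\Delta(G[N(v)])\le 1$, color a matching of red edges inside $G[N(v)]$ (the paper reddens only the rim edges of the copies of $F_n$, you redden all of $G[N(v)]$ --- an immaterial difference), and use Claim~\ref{centerclaim} to conclude that every fan is centered at $v$ and hence destroyed, contradicting the standing assumption.
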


\begin{proof}
	Suppose not, then $\Delta(G[N(v)])\le 1$. For each copy of $F_n$ in $G$, we color all its rim edges red. All rim edges from different copies of $F_n$ form a red matching, since otherwise $\Delta(G[N(v)])\ge 2$. After coloring the remaining edges blue, we see that $G$ contains neither a red $P_3$ nor a blue $F_n$ as a subgraph. This contradicts our assumption.
\end{proof}

\begin{claim}\label{du2}
	If $n$ is odd, then $d(u)=3$. If $n$ is even, then $3\le d(u)\le 4$. Moreover, the graph $G[N(v)\setminus u]$ has maximum degree at most two.
\end{claim}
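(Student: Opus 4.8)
The plan is to continue the contradiction argument already in force: assuming $G\to(P_3,F_n)$ with $|E(G)|=4n+3+\epsilon$, I will show that every value of $d(u)$ outside the claimed range, and every violation of the ``moreover'' clause, lets me colour $G$ with no red $P_3$ and no blue $F_n$, a contradiction. Write $H:=G[N(v)]$, so that $d(u)=\Delta(H)$ by the choice of $u$. The key translation is that, since $v$ is the forced centre (Claim~\ref{centerclaim}) and the red edges must form a matching, an admissible colouring uses at most one red spoke $vw$, and it contains \emph{no} blue $F_n$ exactly when the blue edges of $H$ lying among the blue-spoke vertices have matching number at most $n-1$. Hence in each case it suffices to produce a matching $R\subseteq E(H)$ together with at most one vertex $w\notin V(R)$ (blocked by colouring $vw$ red) for which $\nu\big((H-R)-w\big)\le n-1$.

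The single engine behind all parts is the observation that if $R$ is a \emph{maximum} matching of a graph $K$ coloured red, then $\nu(K-R)\le\min\{\nu(K),\,|E(K)|-\nu(K)\}\le |E(K)|/2$; writing $\nu_1(K):=\min_R\nu(K-R)$, this gives $\nu_1(K)\le |E(K)|/2$, with the strict bound $\nu_1(K)\le (|E(K)|-1)/2$ unless every component of $K$ is a $C_4$ or a $P_3$ (the only connected graphs attaining equality). I combine this with the edge budget $|E(H)|\le 2n+3+\epsilon$, which follows at once from $|E(G)|=4n+3+\epsilon$ and $d(v)=\Delta(G)\ge 2n$.

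For the upper bound, if $d(u)\ge 4+\epsilon$ I block $u$ by colouring the spoke $vu$ red; every usable blue rim edge then lies in $H':=H-u$, and $|E(H')|=|E(H)|-d(u)\le 2n-1$, so the engine forces the blue rim-matching below $|E(H')|/2<n$, a contradiction, giving $d(u)\le 3$ for odd $n$ and $d(u)\le 4$ for even $n$. For the lower bound $d(u)\ge 3$, suppose $\Delta(H)\le 2$; then $H$ is a union of paths and cycles, so $|E(H)|\le |N(v)|=d(v)$, and with the budget $d(v)+|E(H)|\le 4n+3+\epsilon$ this forces $|E(H)|\le 2n+1$ for odd $n$ and $|E(H)|\le 2n+2$ for even $n$. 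Blocking one degree-two vertex $w$ and applying the engine to $H-w$ gives blue rim-matching at most $(|E(H)|-2)/2$, which is already $<n$ for odd $n$; the even case $|E(H)|=2n+2$ is a boundary case handled below. Finally, for the ``moreover'' clause, if $\Delta(H-u)\ge 3$ then $H'=H-u$ has a vertex of degree $\ge 3$, hence a component that is neither $C_4$ nor $P_3$, and the strict engine yields $\nu_1(H')\le (|E(H')|-1)/2$; this is $<n$ except at the even boundary $d(u)=3$, $|E(H)|=2n+4$, again deferred below.

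The hard part will be exactly these boundary cases, where the slack in the budget vanishes and the crude estimate $\nu_1\le |E|/2$ is met with equality by disjoint copies of $C_4$. For the lower bound with even $n$ and $|E(H)|=2n+2$ one forces $d(v)=2n+2$, no edges outside $\{v\}\cup H$, and $H$ a $2$-regular graph, i.e.\ a disjoint union of cycles on $2n+2$ vertices; here I argue by hand, using that $2n+2\not\equiv0\pmod 4$, so $H$ cannot consist only of $C_4$'s: either some odd cycle occurs (they come in pairs, and deleting the one blocked vertex turns it into an even path that a red matching reduces) or a long even cycle occurs (which can be cut into two odd paths by deleting two of its edges). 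For the even boundary of the ``moreover'' clause a degree count shows that $|E(H)|=2n+4$ on $2n$ vertices forces at least eight vertices of degree $3$, whose surplus edges create enough matching ``waste'' to beat the bound. In both situations the delicate point is to pin down the short list of small graphs that remain robust under the combined operation ``delete one vertex, then remove a matching'' --- among which $C_4$, $P_3$, $P_5$ and short odd cycles appear --- and this is precisely what the selection rule for $u$ (keeping a component of $H-u$ different from $C_4$ and $P_5$) is engineered to steer around, so that the strict form of the engine applies; carrying out this small-graph bookkeeping cleanly is the genuine crux.
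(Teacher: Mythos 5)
There is a genuine gap, and it sits in your very first line: you set $d(u)=\Delta(H)$ for $H:=G[N(v)]$, but the $d(u)$ in the claim is the degree of $u$ in $G$. The choice of $u$ gives $d_H(u)=\Delta(H)$, while $d(u)\ge d_H(u)+1$ because of the edge $uv$, and $u$ may have further edges to vertices outside $N(v)\cup\{v\}$. This is not cosmetic. First, your budget arithmetic inherits it: deleting $u$ from $H$ removes $d_H(u)$ edges, not $d(u)$ edges, so from $d(u)\ge 4+\epsilon$ you may only conclude $|E(H-u)|\le (2n+3+\epsilon)-d_H(u)\le 2n+1+\epsilon$ (Claim~\ref{du1} gives only $d_H(u)\ge 2$), not $\le 2n-1$; your engine then yields $\nu_1(H-u)\le n$ at best, which is no contradiction. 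Second, and more fundamentally, edges from $u$ to the outside never enter any of your counts, so no argument of this shape can bound $d_G(u)$ at all --- yet bounding $d_G(u)$ is the actual content of the claim, and is what the paper uses afterwards (the term $d(u)$ in inequality \eqref{equ}, and the deduction that $u$ has two or three neighbours in $G[N(v)]$). The mismatch shows in your own text: your ``lower bound'' step derives $\Delta(H)\ge 3$, whereas the claim for odd $n$ (namely $d_G(u)=3$, combined with Claim~\ref{du1}) forces $\Delta(H)=d_H(u)=2$; so whatever you are proving, it is not the stated statement. Contrast this with the paper's proof, a short double count that bounds $d_G(u)$ directly: colour $uv$ red, colour the $n$ rim edges of a fan $H_1\subseteq G-u$ red, and if a fan $H_2$ still exists on the uncoloured edges, then the $d(u)$ edges at $u$, the set $E_2(H_1)$, and $E(H_2)$ are pairwise disjoint, giving $4n+d(u)\le 4n+3+\epsilon$; the ``moreover'' part then falls out because $E(G-u)$ consists (up to one edge) of two rim matchings, together with the maximality of $u$ in $G[N(v)]$ when $d(u)=3$. (Your accounting can be repaired by also charging $u$'s outside edges against $|E(G)|$, but even then one only gets $|E(H-u)|\le 2n$, and the tight case must be killed by a vertex count, an ingredient absent from your write-up.)

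Even read on its own terms, as bounds on $\Delta(H)$, the proposal is incomplete exactly where you flag it. Your engine and the characterization of $C_4$ and $P_3$ as the only tight connected graphs are correct and nice, but the deferred even-$n$ boundary cases are not closed by your sketches. For the ``moreover'' boundary, the heuristic that eight degree-$3$ vertices create enough ``matching waste'' is false as stated: if $H-u$ were a diamond ($K_4$ minus an edge) together with $(n-2)/2$ disjoint copies of $C_4$, then $|E(H-u)|=2n+1$ and $\nu_1(H-u)=2+(n-2)=n$ exactly, so blocking $u$ and removing an optimal red matching still leaves a blue matching of size $n$ in $H-u$, hence a blue $F_n$. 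What rules out this and every similar configuration is again a vertex count --- such an $H-u$ needs $2n$ vertices while only $d(v)-1=2n-1$ are available --- and nothing of that kind appears in your proposal. Since you yourself describe the remaining bookkeeping as ``the genuine crux,'' the attempt does not establish the claim.
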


\begin{proof}
	We color the edge $uv$ red. Thus each copy of a blue $F_n$ does not contain any edge incident to $u$. If $G-u$ contains no $F_n$, we color all other edges blue and our proof is done. If $G-u$ contains a copy of $F_n$, denoted by $H_1$, we color all edges of $E_2(H_1)$ red, where $E_2(H_1)$ is the set of rim edges of $H_1$. If the graph induced by all uncolored edges contains no $F_n$, then we color all uncolored edges blue and our proof is done. If the graph induced by all uncolored edges contains an $F_n$, denoted by $H_2$, then $G$ has at least $d(u)+\vert E_2(H_1)\vert +\vert E(H_2)\vert $ edges, which is $4n+d(u)$. Combining the fact $\vert E(G)\vert =4n+3+\epsilon$ and Claim \ref{du1}, we have $d(u)=3$ if $n$ is odd, and $3\le d(u)\le 4$ if $n$ is even. If $n$ is even and $d(u)=4$, then the $4n$ edges of $E(G-u)$ consists of  all edges from $E_2(H_1)$ and $E(H_2)$. Thus, no matter $d(u)=4$ or $d(u)=3$, the graph $G[N(v)\setminus u]$ has maximum degree at most two.
\end{proof}

Recall that we have colored $uv$ red. Now we color all other edges incident to $u$ or $v$ blue. By Claim \ref{du2}, the graph $G[N(v)\setminus u]$ consists of a disjoint join of cycles, paths, and isolated vertices. For a cycle of length $\ell$ with $\ell\ge 3$, and for each $i$ with $0\le i\le \flo{\ell/3}-1$, starting from any edge, we color the $(3i+1)$-th edge along the cycle with red color, and the other edges blue. Thus, a maximum blue matching of $C_{\ell}$ has $\cei{\ell/3}$ edges under this coloring. For a path of length $\ell$, and for each $i$ with $0\le i\le \flo{\ell/3}-1$, starting from any pendant edge, we color the $(3i+1)$-th edge along the path with red color, and the other edges blue. Thus, if $\ell\ge 2$, a maximum blue matching of $P_{\ell+1}$ has $\cei{\ell/3}$ edges under this coloring. If $\ell\le 1$, $P_{\ell+1}$ has no blue edge.

We color the remaining uncolored edges blue, and prove that $G$ contains no blue $F_n$. If not, $G[N(v)\setminus u]$ has a blue matching with $n$ edges. Assume that the cycles in $G[N(v)\setminus u]$ have length $c_1,c_2,\ldots, c_s$, respectively, and the paths of length at least two in $G[N(v)\setminus u]$ have length $p_1,p_2,\ldots, p_t$, respectively. Then the size of a maximum blue matching in $G[N(v)\setminus u]$ is $$\sum_{i=1}^{s}\cei{c_i/3}+\sum_{j=1}^{t}\cei{p_j/3},$$ which is at least $n$. And the size of $G$ is at least $$2\sum_{i=1}^{s}c_i+\sum_{j=1}^{t}(2p_j+1)+d(u),$$ which is at most $4n+3+\epsilon$. It follows that $\sum_{i=1}^{s}4\cei{c_i/3}+\sum_{j=1}^{t}4\cei{p_j/3}+3+\epsilon\ge 4n+3+\epsilon \ge 2\sum_{i=1}^{s}c_i+\sum_{j=1}^{t}(2p_j+1)+d(u)$. Hence we have
\begin{equation}
	\sum_{i=1}^{s}(4\cei{c_i/3}-2c_i)+\sum_{j=1}^{t}(4\cei{p_j/3}-2p_j-1)+3+\epsilon\ge d(u). \label{equ}
\end{equation} Note that $c_i\ge 3$ for $1\le i\le s$, and $p_j\ge 2$ for $1\le j\le t$. It is easy to check that if $c_i=4$, then $4\cei{c_i/3}-2c_i=0$; if $c_i=3$ or $c_i\ge 5$, then $4\cei{c_i/3}-2c_i\le -2$; if $p_j=2$ or $p_j=4$, then $4\cei{p_j/3}-2p_j-1=-1$; if $p_j=3$ or $p_j\ge 5$, then $4\cei{p_j/3}-2p_j-1\le -3$. By Claim \ref{du2}, $G[N(v)\setminus u]$ consists of a disjoint union of $C_4$ if $n$ is odd. If $n$ is even, then $G[N(v)\setminus u]$ consists of a disjoint union of $C_4$, and at most one component which is a path with length two or four.

By Claims \ref{du1} and \ref{du2}, $u$ has two or three neighbors in $G[N(v)]$. We first consider the case that $G[N(v)\setminus u]$ consists of a disjoint union of $C_4$. If $u$ has two neighbors in $G[N(v)]$, denoted by $w_1,w_2$, then $w_1$ has three neighbors in $G[N(v)]$, contradicting the choice of $u$. If $u$ has three neighbors in $G[N(v)]$, denoted by $w_1,w_2,w_3$, then each $w_i$ has three neighbors in $G[N(v)]$ for $1\le i\le 3$. Moreover, there is a $w_i$ for some $1\le i\le 3$ such that $G[N(v)\setminus w_i]$ contains a component which is neither a $C_4$ nor a $P_5$, which contradicts the choice of $u$ again. Next we consider the case that $G[N(v)\setminus u]$ contains a path of length two as a component. In this case, $n$ is even. To find a blue matching with $n$ edges in $G[N(v)]$, it must contain at least $n/2$ disjoint copies of $C_4$. Thus, $G$ has at least $4n+5$ edges, a contradiction. Finally, we consider the case that $G[N(v)\setminus u]$ contains a path of length four as a component. By the inequality \eqref{equ}, $n$ is even, and $u$ has exactly two neighbors in $G[N(v)]$, denoted by $w_1, w_2$. If $w_1$ has two neighbors in $G[N(v)\setminus u]$, then $w_1$ has three neighbors in $G[N(v)]$, which contradicts the choice of $u$. Thus, $w_1$ has to be an end vertex of the path $P_5$, and so does $w_2$. Thus, all components of $G[N(v)]$ are cycles, one of which is $C_6$, and the others are $C_4$. By the choice of $u$, we would choose a vertex of $C_4$ rather than a vertex of $C_6$ to be $u$, which contradicts the fact that $u$ belongs to $C_6$. This completes our proof.

\section{Proof of Theorem \ref{matchingfan}}\label{section3}

\begin{figure}[htbp]
	\centering
	\includegraphics[width=0.8\textwidth]{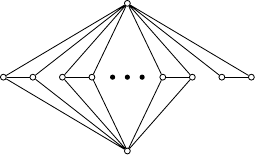}
	\caption{The graph $H$ satisfying $H\to (2K_2,F_n)$}
	\label{figure3}
\end{figure}

We first prove that $\hr(2K_2,F_n)=5n+3$ for $n\ge 3$. To prove the upper bound, we first consider a matching with $n$ edges, and join two new vertices $u$ and $v$ to every vertex of the matching respectively. These are two fans $F_n$ sharing the same rim edges. Then we add a new edge $u_1u_2$, and join $u$ to $u_1$ and $u_2$. Denote this graph by $H$ (see Figure \ref{figure3}). We prove that $H\to (2K_2,F_n)$. For any red-blue edge-coloring of $H$, if there is no red $2K_2$, then the red edges form either a star or a triangle. If the red edges form a triangle, or a star whose center is not $u$, then there is always a blue $F_n$ with $u$ as its center. If the red edges form a star whose center is $u$, then there is a blue $F_n$ with $v$ as its center. So $H\to (2K_2,F_n)$, and $\hr(2K_2,F_n)\le 5n+3$ for $n\ge 3$.

Now we turn to the lower bound. Suppose to the contrary that there is a graph $G$ with $5n+2$ edges and $G\to (2K_2,F_n)$ for $n\ge 3$. For any vertex $x$, we claim that the graph $G-x$ must contain an $F_n$. If not, we color all edges incident to $x$ red and the other edges of $G$ blue. This coloring implies that $G\not\to (2K_2,F_n)$, which contradicts our assumption. Let $v$ be a vertex of $G$ with maximum degree. Then $d(v)\ge 2n$. Notice that $G-v$ contains an $F_n$, denoted by $H_1$. We denote the center of $H_1$ by $u$.

If $d(v)\ge 2n+3$,  we have $\vert E(G)\vert \ge d(v)+\vert E(H_1)\vert \ge 5n+3$, a contradiction. If $2n+1\le d(v)\le 2n+2$, then $d(v)+\vert E(H_1)\vert \ge 5n+1$. Since $\vert E(G)\vert =5n+2$, except for at most one edge, every edge of $G$ is either incident to $v$, or an edge of $H_1$. As $n\ge 3$, $v$ is the only vertex with degree at least $2n$ in $G-u$. Since $G-u$ contains an $F_n$, the center of this $F_n$ must be $v$. Thus a matching with $n$ edges is contained in $G[N(v)]$. Consequently, all but at most one rim edge of $H_1$ is contained in $G[N(v)]$. There is a triangle that contains both $v$ and a rim edge of $H_1$. We color its edges red, and all other edges of $G$ blue. It is not difficult to check that there is no blue $F_n$ with $u$ as its center. If there is a blue $F_n$ with $v$ as its center, then $d(v)=2n+2$. But there are only $n-1$ blue edges in $G[N(v)]$ in this case. As a result, $G$ contains no blue $F_n$, a contradiction. If $d(v)=2n$, then $d(u)=2n$. Since $\vert E(G)\vert =5n+2$, except for at most two edges, every edge of $G$ is either incident to $v$, or an edge of $H_1$. Hence for every vertex $x\in V(G)\setminus \{u,v\}$, we have $d(x)\le 5<2n$. That is to say, each copy of $F_n$ must have center either in $u$ or in $v$. If $N(u)\cap N(v)=\emptyset$, then $\vert E(G)\vert \ge 3n+3n>5n+2$, because $G-u$ contains an $F_n$. If $N(u)\cap N(v)\not=\emptyset$, then we choose a common neighbor of $u$ and $v$, denoted by $w$. We color all edges incident to $w$ red, and the other edges blue. Thus, there is no red $2K_2$ or blue $F_n$, a final contradiction.

Next we show $\hr(2K_2,F_2)=12$. The upper bound of $\hr(2K_2,F_2)$ follows from the fact that $2F_2\to (2K_2,F_2)$. For the lower bound, we use a similar argument as in the case $n\ge 3$. Suppose by contradiction that there is a graph $G$ of size $11$ satisfying $G\to (2K_2,F_2)$. For any vertex $x$, the graph $G-x$ contains an $F_2$, since otherwise we color all edges incident to $x$ red and the other edges blue, which implies that $G\not \to (2K_2,F_2)$. We use the same notation as above for simplicity. Let $v$ be a vertex with maximum degree, $H_1$ a fan $F_2$ in $G-v$, and $u$ the center of $H_1$. If $d(v)\ge 6$, then $\vert E(G)\vert \ge d(v)+\vert E(H_1)\vert =12$, a contradiction. If $d(v)=5$, since $\vert E(G)\vert =d(v)+\vert E(H_1)\vert =11$, every edge of $G$ is either incident to $v$, or an edge of $H_1$. It is easy to find an edge-coloring of $G$ such that there is neither a red $2K_2$ nor a blue $F_2$. If $d(v)=4$, then $d(u)=4$. Except for at most one edge, denoted by $e_1$, every edge of $G$ is either incident to $v$, or an edge of $H_1$. Any $F_2$ contained in $G-u$ must have $v$ as its center, since $v$ is the only vertex with degree at least four in $G-u$. Let $e_2$ and $e_3$ be the rim edges of $H_1$. If $e_1$ is incident with $e_2$ or $e_3$, say, $e_1$ and $e_2$ share a common end vertex $w$, then we color all edges incident to $w$ red, and the other edges blue. Hence $G\not \to (2K_2,F_2)$. If $e_1,e_2,e_3$ form a matching, then at least one of $e_2,e_3$ is contained in $G[N(v)]$. Assume without loss of generality that $e_2$ is contained in $G[N(v)]$, then we color $e_2$ red and the other edges blue. Again, we have $G\not \to (2K_2,F_2)$.

\section{Proof of Theorem \ref{matchingpath}}\label{section4}

The theorem for $m=2$ is easily verified, because $\hr(n_1K_2,n_2K_2)=n_1+n_2-1$. So we assume that $m\ge 3$. We need the following two lemmas for our argument. The first one gives us the upper bound, and the second gives us the lower bound for any connected graph $G$.
\begin{lemma}[Vito and Silaban~\cite{Vito2022Two}]\label{upperbound}
	For $n\ge 1$ and $m\ge 3$, we have $\hr(2K_2,nP_m)\le \min\{nm+1, (n+1)(m-1)\}$.
\end{lemma}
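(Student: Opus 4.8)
The final statement to prove is Lemma~\ref{upperbound}, the upper bound $\hr(2K_2,nP_m)\le \min\{nm+1,(n+1)(m-1)\}$. Since this is attributed to Vito and Silaban, my plan is to supply a clean self-contained construction for each of the two quantities in the minimum, exhibiting an explicit host graph $G$ with the stated number of edges and verifying $G\to(2K_2,nP_m)$. The key structural observation is that avoiding a red $2K_2$ forces the red edges to form a star or a triangle (a graph with no two independent edges), so the complement of a ``small'' red set is still very dense; the task reduces to showing that after deleting the edges of any star or triangle, the remaining (blue) graph still contains $nP_m$.

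\textbf{The $nm+1$ construction.} First I would take $G$ to be the graph $K_1 + nP_m$, the join of a single vertex $c$ to a disjoint union of $n$ paths each on $m$ vertices. This has $n(m-1)$ path edges plus $nm$ spoke edges, which is too many, so instead I would look for the extremal graph achieving exactly $nm+1$ edges. A natural candidate is $nP_{m+1}$ together with a pendant or sharing device; more precisely I expect the right object to be $n$ copies of $P_{m+1}$ (giving $nm$ edges) plus one extra edge, or a single path $P_{nm+2}$. The verification is: in any red-blue coloring with no red $2K_2$, the red edges lie in a star centered at some vertex $x$ or in a triangle. Deleting at most the edges at one vertex $x$ from a union of sufficiently long paths still leaves $n$ blue subpaths each of length $m-1$, because removing one vertex from a path $P_{m+1}$ splits it but each piece plus careful bookkeeping across the $n$ components recovers $n$ disjoint blue $P_m$'s.

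\textbf{The $(n+1)(m-1)$ construction.} For the second bound I would use $G = (n+1)P_m$, which has exactly $(n+1)(m-1)$ edges. Here the verification is cleaner: if there is no red $2K_2$, then all red edges meet a single vertex $x$ (the star case) or lie in one triangle, and in either case the red edges are confined to at most one of the $n+1$ copies of $P_m$ (since distinct copies are vertex-disjoint, a star or triangle cannot span two of them). Thus at least $n$ of the $n+1$ copies are entirely blue, yielding a blue $nP_m$. This case is essentially immediate once the ``red $=$ star or triangle'' dichotomy is recorded.

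\textbf{Main obstacle.} The triangle and star dichotomy is routine, and the $(n+1)P_m$ case is easy; the genuine work is the $nm+1$ case, where a single star-center $x$ can cut across the long paths and one must argue that $n$ disjoint blue copies of $P_m$ survive. The hard part will be the careful combinatorial accounting showing that deleting all edges incident to one vertex (or a triangle's three edges) from the chosen host graph of size $nm+1$ still leaves room for $n$ disjoint blue $P_m$'s, especially in boundary configurations where $x$ sits in the interior of a path and the extra $+1$ edge must be exploited; I would handle this by choosing the host graph so that each path is long enough (or so that an extra vertex absorbs the deletion) that one vertex removal never destroys more than one potential $P_m$.
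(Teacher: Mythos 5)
Your $(n+1)(m-1)$ construction is exactly the paper's: $(n+1)P_m$ works for precisely the reason you give (a red star or triangle lies inside a single component, so $n$ components stay entirely blue). The genuine gap is in the $nm+1$ part, where you never commit to a host graph, and both candidates you float fail. The single path $P_{nm+2}$ is refuted by a uniform counterexample: color red the two edges incident to the \emph{third} vertex of $P_{nm+2}$. There is no red $2K_2$, and the blue graph is $P_2\cup P_{nm-1}$ together with an isolated vertex; since $\lfloor (nm-1)/m\rfloor=n-1$ and $P_2$ contains no $P_m$ (as $m\ge 3$), there are only $n-1$ disjoint blue copies of $P_m$. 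So $P_{nm+2}\not\to(2K_2,nP_m)$; for instance $P_8\not\to(2K_2,2P_3)$. The structural reason is the one your ``careful bookkeeping'' cannot overcome: an interior star center splits the path into two pieces whose orders sum to $nm+1$, and the residues of those orders modulo $m$ can be made to sum to $m+1$, which destroys one copy of $P_m$ irrecoverably. Your other candidate, $nP_{m+1}$ plus one extra edge, has the same defect: a red star centered at an interior vertex of one copy of $P_{m+1}$ kills the blue $P_m$ in that component, and a single extra edge, wherever it is placed, cannot repair every choice of center.

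The missing idea is the paper's host graph for this bound: the cycle $C_{nm+1}$, which has exactly $nm+1$ edges. Since $nm+1\ge m+1\ge 4$, the cycle contains no triangle, so in any coloring with no red $2K_2$ the red edges form a star, i.e., a subset of the two edges incident to a single vertex $x$. Deleting both leaves a blue path on the remaining $nm$ vertices, which contains $\lfloor nm/m\rfloor=n$ disjoint copies of $P_m$; deleting fewer edges leaves an even longer blue path. The point of the cycle is its symmetry: no matter where the red star sits, the blue remainder is one single path of order at least $nm$, so the two-piece residue problem that defeats $P_{nm+2}$ never arises.
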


The proof of Lemma \ref{upperbound} is straightforward. The upper bound $nm+1$ follows from the fact that $C_{nm+1}\to (2K_2,nP_m)$, and the upper bound $(n+1)(m-1)$ follows from the fact that $(n+1)P_m\to (2K_2,nP_m)$.

\begin{lemma}[Vito and Silaban~\cite{Vito2022Two}]\label{connected}
	Let $m$ and $n$ be two positive integers and $m\ge 3$. For any connected graph $G$ with $nm$ edges, we have $G\not\to (2K_2,nP_m)$.
\end{lemma}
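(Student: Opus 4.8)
The plan is to exhibit, for every connected $G$ with $nm$ edges, a red--blue edge-coloring in which the red edges form a star (so they contain no $2K_2$) while the blue edges contain no $nP_m$; this is exactly what $G\not\to(2K_2,nP_m)$ requires. The governing observation is a vertex count: if we color all edges at a chosen vertex $v$ red, then $v$ becomes isolated in the blue graph, so the blue graph has at least one fewer vertex of positive degree than $G$. Since a copy of $nP_m$ spans exactly $nm$ vertices, all of which have positive degree (here $m\ge 3\ge 2$ guarantees every path component carries an edge), the blue graph cannot contain $nP_m$ as soon as it has at most $nm-1$ vertices of positive degree. Thus the whole problem reduces to controlling how many vertices we can force to become isolated with a single red star.

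First I would bound $|V(G)|$. A connected graph with $nm$ edges has at most $nm+1$ vertices, with equality precisely when $G$ is a tree; if $G$ contains a cycle then $|V(G)|\le nm$. In the latter case, coloring the edges at an arbitrary vertex $v$ red isolates $v$ in the blue graph, leaving at most $nm-1$ vertices of positive degree, and hence no blue $nP_m$. This settles every connected $G$ that is not a tree, and the red edges visibly form a star, so no red $2K_2$ appears.

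The remaining, and main, case is that $G$ is a tree on $nm+1$ vertices; here isolating a single vertex is not enough, and the crux is to knock out \emph{two} vertices simultaneously with one red star. I would take $v$ to be the neighbor $w$ of a leaf $\ell$, which exists since $nm+1\ge 4$. Coloring all edges at $w$ red isolates $w$ in blue and, because $\ell$'s only incident edge is $w\ell$, simultaneously isolates $\ell$; the blue graph then has at most $(nm+1)-2=nm-1$ vertices of positive degree, so it again contains no $nP_m$. The red edges still form a star, so there is no red $2K_2$, and in all cases we conclude $G\not\to(2K_2,nP_m)$. The one place to argue carefully is the reduction itself, namely that containing $nP_m$ genuinely forces $nm$ non-isolated vertices in blue, combined with the elementary fact that a connected non-tree loses a vertex against the tree bound; once these are pinned down, the leaf-neighbor choice handles the only genuinely delicate case with a single star.
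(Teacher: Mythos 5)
Your proof is correct. There is, however, nothing in the paper to compare it against: the paper states this lemma as a quoted result of Vito and Silaban~\cite{Vito2022Two} and gives no proof of it, so your argument supplies a justification the paper omits. As a self-contained proof it is complete and elementary: a red star can never contain $2K_2$, and your counting criterion — a blue $nP_m$ occupies $nm$ vertices, each of positive blue degree, so it cannot exist once at most $nm-1$ vertices have positive blue degree — is exactly right. Your case split is also genuinely necessary rather than cosmetic. In the non-tree case $|V(G)|\le nm$ and any vertex works; in the tree case an arbitrary vertex can fail: for $G=P_{nm+1}$ with vertices $v_1\cdots v_{nm+1}$, reddening the star at $v_{m+1}$ leaves a blue $P_m\cup P_{(n-1)m}$, which does contain $nP_m$. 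Your choice of the neighbor of a leaf, which isolates two vertices with a single red star, is precisely the fix that handles this, and with it every case closes.
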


Let $N=\min\{nm+1, (n+1)(m-1)\}$. We prove the lower bound by contradiction. Suppose to the contrary that $G$ is a graph with $N-1$ edges and $G\to (2K_2,nP_m)$. By Lemma \ref{connected} and $\vert E(G)\vert \le nm$, $G$ is a disconnected graph. Let $G_1, G_2, \ldots, G_t$ be all the components of $G$. For each $i$ with $1\le i\le t$, let $a_i$ be the largest integer such that $G_i$ contains the disjoint union of $a_i$ copies of $P_m$. If $\sum_{i=1}^t a_i<n$, we color all edges blue, and then $G\not\to (2K_2,nP_m)$. If $\sum_{i=1}^t a_i>n$, then $\vert E(G)\vert \ge (m-1)\sum_{i=1}^t a_i\ge (n+1)(m-1)$, which contradicts the size of $G$. Thus we have $\sum_{i=1}^t a_i=n$. If there exists some $i$ with $1\le i\le t$ such that $G_i\not\to (2K_2, a_iP_m)$, then $G_i$ has a coloring that $G_i$ contains neither a red $2K_2$ nor a blue $a_iP_m$. We extend this coloring to a coloring of $G$ by coloring all other edges blue. The graph $G$ contains neither a red $2K_2$ nor a blue $nP_m$ under this coloring. This contradiction implies that $G_i\to (2K_2, a_iP_m)$ for each $i$ with $1\le i\le t$. Applying Lemma \ref{connected} to each component $G_i$, we have $\vert E(G)\vert =\sum_{i=1}^{t}\vert E(G_i)\vert \ge \sum_{i=1}^{t}(a_im+1)=m\sum_{i=1}^{t}a_i+t=mn+t\ge nm+2>\vert E(G)\vert $, a final contradiction.

\bmhead{Acknowledgments}

The second author was partially supported by NSFC under grant numbers 11601527, while
the third author was partially supported by NSFC under grant numbers 12161141003 and 12271246.




\end{document}